\newtheorem{theorem}{Theorem}
\theoremstyle{plain}
\newtheorem{definition}{Definition}
\newtheorem{lemma}{Lemma}
\begin{document}
\title[Inverse CFSL problems]{Inverse problems for a conformable fractional
Sturm-Liouville operator}
\author{A. Sinan Ozkan}
\curraddr{Department of Mathematics, Faculty of Science, Sivas Cumhuriyet
University 58140 Sivas, Turkey}
\email{sozkan@cumhuriyet.edu.tr}
\author{\.{I}brahim Adalar}
\curraddr{Zara Veysel Dursun Colleges of Applied Sciences, Sivas Cumhuriyet
University Zara/Sivas, Turkey}
\email{iadalar@cumhuriyet.edu.tr}
\subjclass[2000]{ 31B20, 34A55, 34B24, 26A33}
\keywords{Inverse problem, conformable fractional derivatives, Weyl
function, Hochstadt Lieberman theorem.}

\begin{abstract}
In this paper, a Sturm-Liouville boundary value problem equiped with
conformable fractional derivates is considered. We give some uniqueness
theorems for the solutions of inverse problems according to the Weyl
function, two given spectra and classical spectral data. We also study on
half-inverse problem and prove a Hochstadt and Lieberman-type theorem.
\end{abstract}

\maketitle

\section{\textbf{Introduction }}

Inverse spectral problems consist in recovering the coefficients of an
operator from some given data; for example Weyl function, spectral function,
nodal points and some special sequences which consist of some spectral
values. Various inverse problems for the classical Sturm-Liouville operator
have been studied for about ninety years (see \cite{Ambar}, \cite{Borg}-\cite%
{horv}, \cite{lev}, \cite{levitan}, \cite{marc}, \cite{marc2}, \cite{troos}, 
\cite{ozk} and the references therein). Since these kinds of problems appear
in mathematical physics, mechanics, electronics, geophysics and other
branches of natural sciences the literature on this area is vast.

Fractional derivative which is as old as calculus appears by a question of
L'Hospital to Leibniz in 1695. He asked what does it mean $\frac{d^{n}f}{%
dx^{n}}$ if $n=1/2$. Later on, many researchers tried to give a definition
of a fractional derivative. Most of them used an integral form for the
fractional derivative (see \cite{mil}, \cite{old}). However, almost all of
them fail to satisfy some of the basic properties owned by usual
derivatives, for example chain rule, the product rule, mean value theorem
and etc. In 2014, the authors Khalil et al. introduced a new simple
well-behaved definition of the fractional derivative called conformable
fractional derivative\ \cite{hal}. One year later, Abdeljawad gave the
fractional versions of some important concepts e.g. chain rule, exponential
functions, Gronwall's inequality, integration by parts, Taylor power series
expansions and etc \cite{abd}. Also, other basic properties on conformable
derivative can be found in \cite{atan}. It seems to satisfy all the
requirements of the standard derivative. Because of its effectiveness and
applicability, conformable derivative has received a lot of attention and
has applied quickly to various areas.

In recent years, some new fractional Sturm-Liouville problems have been
studied (see \cite{Bp}, \cite{al}, \cite{kro}, \cite{kli}, \cite{riv}).
These problems appear in various branches of natural sciences (see \cite{bal}%
, \cite{main}, \cite{mon}, \cite{pal}, \cite{sil}). Although the inverse
Sturm-Liouville problems with classical derivation are studied extensively,
there is only one study about this subject with conformable fractional
derivation. Mortazaasl and Akbarfam gave a solution of inverse nodal problem
for conformable fractional Sturm-Liouville operator in \cite{Oz}.

In the present paper, we consider a conformable fractional Sturm-Liouville
boundary value problem and give uniqueness theorems for the solution of
inverse problem according to the Weyl function, two eigenvalues-sets and the
sequences which consist of eigenvalues and norming constants. We also study
on half-inverse problem and prove a Hochstadt and Lieberman-type theorem.

\section{\textbf{Preliminaries}}

Before presenting our main results, we recall the some important concepts of
the conformable fractional calculus theory.

\begin{definition}
Let $f:[0,\infty )\rightarrow 
\mathbb{R}
$ be a given function. Then, the conformable fractional derivative of order $%
0<\alpha \leq 1$ of $f$ \ at $x>0$ is defined by:%
\begin{equation*}
D^{\alpha }f(x)=\underset{h\rightarrow 0}{\lim }\frac{f(x+hx^{1-\alpha
})-f(x)}{h},
\end{equation*}%
and the fractional derivative at $0$ is defined as $D^{\alpha }f(0)=\underset%
{x\rightarrow 0^{+}}{\lim }D^{\alpha }f(x).$
\end{definition}

\begin{definition}
Let $f:[0,\infty )\rightarrow 
\mathbb{R}
$ be a given function. The conformable fractional integral of $f$ of order $%
\alpha $ is defined by:%
\begin{equation*}
I_{\alpha }f(x)=\int\limits_{0}^{x}f(t)d_{\alpha
}t=\int\limits_{0}^{x}t^{\alpha -1}f(t)dt,
\end{equation*}%
for all $x>0.$
\end{definition}

We collect some necessary relations in the following lemma.

\begin{lemma}
\bigskip Let $f,g$ be $\alpha $-differentiable at $x,$ $x>0.$

i) $D_{x}^{\alpha }(af+bg)=aD_{x}^{\alpha }f+bD_{x}^{\alpha }g,$ $\forall
a,b\in 
\mathbb{R}
,$

ii) $D_{x}^{\alpha }(x^{a})=ax^{a-\alpha },$ $\forall a\in 
\mathbb{R}
,$

iii) $D_{x}^{\alpha }(c)=0,$ ($c$ is a constant)

iv) $D_{x}^{\alpha }(fg)=D_{x}^{\alpha }(f)g+fD_{x}^{\alpha }(g)$,\newline

v) $D_{x}^{\alpha }(f/g)=\frac{D_{x}^{\alpha }(f)g-fD_{x}^{\alpha }(g)}{g^{2}%
},$

vi) if $f$ is a continuous function, then for all $x>0,$ we have $\
D_{x}^{\alpha }I_{\alpha }f(x)=f(x),$

vii) if $f$ is a differentiable function, then we have$\ D_{x}^{\alpha
}f(x)=x^{1-\alpha }f^{^{\prime }}(x),$
\end{lemma}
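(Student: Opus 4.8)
The plan is to establish (vii) first, since every other identity then follows from it together with the corresponding rule of ordinary calculus. Fix $x>0$ and let $f$ be differentiable at $x$. In the difference quotient defining $D^{\alpha}f(x)$, perform the change of variable $t=hx^{1-\alpha}$; since $x>0$ we have $x^{1-\alpha}\neq 0$, so $t\to 0$ if and only if $h\to 0$, and
\begin{equation*}
\frac{f(x+hx^{1-\alpha})-f(x)}{h}=x^{1-\alpha}\,\frac{f(x+t)-f(x)}{t}\longrightarrow x^{1-\alpha}f'(x).
\end{equation*}
This proves (vii). Then (ii) follows by taking $f(x)=x^{a}$, which is differentiable for $x>0$, giving $D^{\alpha}(x^{a})=x^{1-\alpha}\cdot ax^{a-1}=ax^{a-\alpha}$; and (iii) is the special case $a=0$ (or directly, the difference quotient of a constant vanishes).

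For (i), (iv) and (v) one can either invoke (vii) together with the linearity, product and quotient rules for $f'$, or—to keep the hypothesis at the stated level of $\alpha$-differentiability—argue straight from the definition. Linearity (i) is immediate from linearity of the limit. For the product rule (iv), write
\begin{equation*}
\frac{(fg)(x+hx^{1-\alpha})-(fg)(x)}{h}=f(x+hx^{1-\alpha})\,\frac{g(x+hx^{1-\alpha})-g(x)}{h}+g(x)\,\frac{f(x+hx^{1-\alpha})-f(x)}{h},
\end{equation*}
and let $h\to 0$, using that $\alpha$-differentiability of $f$ at $x$ forces $f(x+hx^{1-\alpha})\to f(x)$; the limit is $f(x)D^{\alpha}g(x)+g(x)D^{\alpha}f(x)$. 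The quotient rule (v) is obtained the same way after the usual algebraic rearrangement of $\tfrac{1}{h}\bigl(\tfrac{f}{g}(x+hx^{1-\alpha})-\tfrac{f}{g}(x)\bigr)$, valid where $g(x)\neq 0$. Finally, for (vi): since $f$ is continuous, $I_{\alpha}f(x)=\int_{0}^{x}t^{\alpha-1}f(t)\,dt$ is an ordinary differentiable function of $x>0$ with $(I_{\alpha}f)'(x)=x^{\alpha-1}f(x)$ by the fundamental theorem of calculus, so by (vii), $D^{\alpha}I_{\alpha}f(x)=x^{1-\alpha}\cdot x^{\alpha-1}f(x)=f(x)$.

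None of these steps presents a genuine obstacle; the only points requiring care are the domain restriction $x>0$ (so that the change of variable in (vii) is legitimate and $x^{\alpha-1}$ is meaningful) and the observation that $\alpha$-differentiability at a point implies continuity there, which is precisely what makes the passage to the limit in the product and quotient rules valid.
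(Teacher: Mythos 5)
Your proof is correct. The paper itself gives no proof of this lemma --- it is quoted as a collection of known facts from the cited references on conformable calculus (Khalil et al., Abdeljawad, Atangana et al.), and those sources argue exactly as you do: the substitution $t=hx^{1-\alpha}$ for (vii), the standard add-and-subtract decomposition for the product and quotient rules, and the fundamental theorem of calculus combined with (vii) for (vi). Your observation that $\alpha$-differentiability at $x>0$ implies continuity there, which justifies the limit passage in (iv) and (v), is the only point of genuine care and you have handled it.
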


\begin{lemma}
Let $f,g:(0,\infty )\rightarrow 
\mathbb{R}
$ be $\alpha $-differentiable functions and $h(x)=f(g(x)).$ Then, $h(x)$ is $%
\alpha $-differentiable and for all $x\neq 0$ and $g(x)\neq 0,$%
\begin{equation*}
(D_{x}^{\alpha }h)(x)=(D_{x}^{\alpha }f)(g(x))(D_{x}^{\alpha }g)(x)g^{\alpha
-1}(x),
\end{equation*}%
if $x=0,$ then $\ \ (D_{x}^{\alpha }h)(0)=\underset{x\rightarrow 0^{+}}{\lim 
}(D_{x}^{\alpha }f)(g(x))(D_{x}^{\alpha }g)(x)g^{\alpha -1}(x).$
\end{lemma}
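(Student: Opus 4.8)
The plan is to deduce the conformable chain rule from the classical one via the relation $D_x^{\alpha}f(x)=x^{1-\alpha}f'(x)$ recorded in part (vii) above. First I would note that, for $x>0$, $\alpha$-differentiability is actually equivalent to ordinary differentiability: substituting $\varepsilon=hx^{1-\alpha}$ in the defining difference quotient gives $D_x^{\alpha}f(x)=x^{1-\alpha}\lim_{\varepsilon\to 0}\frac{f(x+\varepsilon)-f(x)}{\varepsilon}$, and since $x^{1-\alpha}\neq 0$ the left-hand limit exists exactly when $f'(x)$ does, with $D_x^{\alpha}f(x)=x^{1-\alpha}f'(x)$. Consequently the hypotheses guarantee that $f$ and $g$ are differentiable on $(0,\infty)$ in the usual sense, which is what makes part (vii) legitimately applicable to them.

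Next, fix $x\neq 0$ with $g(x)\neq 0$. The classical chain rule applied to $h=f\circ g$ gives that $h$ is differentiable at $x$ with $h'(x)=f'(g(x))\,g'(x)$, hence $h$ is $\alpha$-differentiable at $x$. Now I would simply invoke the identity of part (vii) three times: $D_x^{\alpha}h(x)=x^{1-\alpha}h'(x)=x^{1-\alpha}f'(g(x))g'(x)$, while $(D_x^{\alpha}f)(g(x))=g^{1-\alpha}(x)f'(g(x))$ and $(D_x^{\alpha}g)(x)=x^{1-\alpha}g'(x)$. Forming the product $(D_x^{\alpha}f)(g(x))(D_x^{\alpha}g)(x)g^{\alpha-1}(x)$, the powers $g^{1-\alpha}(x)$ and $g^{\alpha-1}(x)$ cancel and we are left with $x^{1-\alpha}f'(g(x))g'(x)=D_x^{\alpha}h(x)$, which is the claimed formula for $x>0$.

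For the value at $x=0$ there is nothing further to do: by the definition of the conformable derivative at the origin, $D_x^{\alpha}h(0)=\lim_{x\to 0^{+}}D_x^{\alpha}h(x)$, and since we have just identified $D_x^{\alpha}h(x)$ with $(D_x^{\alpha}f)(g(x))(D_x^{\alpha}g)(x)g^{\alpha-1}(x)$ for all $x>0$, passing to the limit yields the stated expression whenever that limit exists. I do not expect a genuine obstacle anywhere; the only place calling for a little care is the reduction in the first paragraph. If one instead preferred to argue straight from the limit definition — writing $\Delta=g(x+tx^{1-\alpha})-g(x)$, splitting the difference quotient as $\frac{f(g(x)+\Delta)-f(g(x))}{\Delta}\cdot\frac{\Delta}{t}$, and letting $t\to 0$ using continuity of $g$ and the substitution $sy^{1-\alpha}=\Delta$ with $y=g(x)$ — then the sole subtlety is the familiar one from the classical proof, namely that $\Delta$ may vanish along a sequence $t_n\to 0$; this is handled, as usual, by a Carath\'eodory-type auxiliary function, and the computation of the limiting factors is routine.
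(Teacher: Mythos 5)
The paper does not prove this lemma at all: it is recalled as a known preliminary from \cite{abd}, so there is no in-paper argument to match. Your proof is correct and, in fact, cleaner than the standard one. The key observation --- that for $x>0$ the substitution $\varepsilon=hx^{1-\alpha}$ shows $\alpha$-differentiability at $x$ is \emph{equivalent} to ordinary differentiability, with $D_{x}^{\alpha}f(x)=x^{1-\alpha}f^{\prime}(x)$ --- is exactly the right reduction; note that part (vii) of the paper's Lemma is stated only in the direction ``differentiable $\Rightarrow$ the identity holds,'' so you are right to supply the converse before invoking the classical chain rule. After that, the three applications of the identity and the cancellation of $g^{1-\alpha}(x)$ against $g^{\alpha-1}(x)$ give the formula immediately, and the $x=0$ case is indeed just the definition of the derivative at the origin. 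By contrast, the proof in \cite{abd} (and the alternative you sketch) works directly with the difference quotient, splitting off the factor $\bigl(f(g(x)+\Delta)-f(g(x))\bigr)/\Delta$ and handling the possibility $\Delta=0$ by a Carath\'eodory-type device; your route avoids that case analysis entirely at the cost of the (easy) equivalence in your first paragraph. One small point worth making explicit: for $(D_{x}^{\alpha}f)(g(x))$ and the power $g^{1-\alpha}(x)$ to be meaningful you really need $g(x)>0$, not merely $g(x)\neq 0$, since $f$ is defined on $(0,\infty)$ and $g^{1-\alpha}$ need not be real for negative arguments; this imprecision is inherited from the statement itself rather than introduced by your argument, but it deserves a remark.
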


\bigskip For further knowledge about the conformable fractional derivative,
the reader is referred to \cite{abd} and \cite{atan}, \cite{hal}.

Let us consider the following boundary value problem $L_{\alpha }(q(x),h,H)$%
\ 
\begin{eqnarray}
&&\text{\ }\left. \ell y:=-D_{x}^{\alpha }D_{x}^{\alpha }y+q(x)y=\lambda y%
\text{, \ \ }0<x<\pi \right. \medskip \\
&&\text{ }\left. U(y):=D_{x}^{\alpha }y(0)-hy(0)=0\right. \medskip \\
&&\text{ }\left. V(y):=D_{x}^{\alpha }y(\pi )+Hy(\pi )=0\right. \medskip
\end{eqnarray}%
where $D_{x}^{\alpha }$ is the conformable fractional (CF) derivative of
order $\alpha ,$ $0<\alpha \leq 1,$ $q(t)$ is real valued continuous
function on $\left[ 0,\pi \right] $, $h,H\in 
\mathbb{R}
$ and $\lambda $ is the spectral parameter.

Let the functions $\varphi (x,\lambda )$ and $\psi (x,\lambda )$ be the
solutions of (1) under the initial conditions 
\begin{equation}
\varphi (0,\lambda )=1\text{, }D_{x}^{\alpha }\varphi (0,\lambda )=h\text{
and }\psi (\pi ,\lambda )=1,D_{x}^{\alpha }\psi (\pi ,\lambda )=-H
\end{equation}%
respectively. These solutions are entire according to $\lambda $ for each
fixed $x$ in $\left[ 0,\pi \right] $ and they satisfy the following
asymptotic formulas \cite{Oz}: 
\begin{eqnarray}
\varphi (x,\lambda ) &=&\cos (\frac{\sqrt{\lambda }}{\alpha }x^{\alpha
})+O\left( \dfrac{1}{\sqrt{\lambda }}\exp (\frac{\left\vert \tau \right\vert 
}{\alpha }x^{\alpha })\right) \\
D_{x}^{\alpha }\varphi (x,\lambda ) &=&-\sqrt{\lambda }\sin (\frac{\sqrt{%
\lambda }}{\alpha }x^{\alpha })+O\left( \exp (\frac{\left\vert \tau
\right\vert }{\alpha }x^{\alpha })\right) \\
\psi (x,\lambda ) &=&\cos (\frac{\sqrt{\lambda }}{\alpha }(\pi ^{\alpha
}-x^{\alpha }))+O\left( \dfrac{1}{\sqrt{\lambda }}\exp (\frac{\left\vert
\tau \right\vert }{\alpha }(\pi ^{\alpha }-x^{\alpha }))\right) \\
D_{x}^{\alpha }\psi (x,\lambda ) &=&\sqrt{\lambda }\sin (\frac{\sqrt{\lambda 
}}{\alpha }(\pi ^{\alpha }-x^{\alpha }))+O\left( \exp (\frac{\left\vert \tau
\right\vert }{\alpha }(\pi ^{\alpha }-x^{\alpha }))\right)
\end{eqnarray}%
where $\tau :=\func{Im}\sqrt{\lambda }.$ The function

\begin{equation*}
W_{\alpha }[\psi (x,\lambda ),\varphi (x,\lambda )]=\psi (x,\lambda
)D_{x}^{\alpha }\varphi (x,\lambda )-\varphi (x,\lambda )D_{x}^{\alpha }\psi
(x,\lambda )
\end{equation*}%
is called as the fractional Wronskian of $\psi $ and $\varphi .$\ It is
proven in \cite{Oz} that $W_{\alpha }$ does not depend on $x$ and it can be
written as $W_{\alpha }[\psi (x,\lambda ),\varphi (x,\lambda )]=\Delta
(\lambda )=V(\varphi )=-U(\psi ).$\ Put $G_{\delta }:=\left\{ \sqrt{\lambda }%
:\left\vert \sqrt{\lambda }-\frac{\alpha }{\pi ^{\alpha -1}}k\right\vert
\geq \delta ,\text{ }k=0,1,2,\ldots \right\} ,$ where $\delta $ is a
sufficiently small positive number. The function $\Delta (\lambda )$
satisfies the inequality 
\begin{equation}
\left\vert \Delta (\lambda )\right\vert \geq C_{\delta }\left\vert \sqrt{%
\lambda }\right\vert \exp (\frac{\left\vert \tau \right\vert }{\alpha }\pi
^{\alpha }),\text{ \ }\sqrt{\lambda }\in G_{\delta },\left\vert \sqrt{%
\lambda }\right\vert \geq \rho ^{\ast },
\end{equation}%
for sufficiently large $\rho ^{\ast }=\rho ^{\ast }(\delta ).$

Let $\left\{ \lambda _{n}\right\} _{n\geq 0}$ be the eigenvalues sets of $%
L_{\alpha }(q(x),h,H)$. The numbers $\lambda _{n}$ are real, simple and
satisfy the following asymptotic estimate:%
\begin{equation}
\sqrt{\lambda _{n}}=\left( \frac{\alpha }{\pi ^{\alpha -1}}\right) n+\frac{%
\omega _{\alpha }}{n\pi }+\frac{\kappa _{\alpha _{n}}}{n},\text{ }\kappa
_{\alpha _{n}}\in l_{\alpha }^{2}
\end{equation}%
where $\omega _{\alpha }=h+H+\frac{1}{2}\int\limits_{0}^{\pi }q(t)d_{\alpha
}t$ \cite{Oz}.

\section{\textbf{Uniqueness Theorems}}

Together with $L_{\alpha }$, we consider a boundary value problem $%
\widetilde{L}_{\alpha }=L(\widetilde{q}(x),\widetilde{h},\widetilde{H})$ of
the same form but with different coefficients. We assume that if a certain
symbol $s$ denotes an object related to $L_{\alpha }$ , then $\widetilde{s}$
will denote an analogous object related to $\widetilde{L}_{\alpha }.$

\subsection{According to the Weyl function}

Let $S(x,\lambda )$\ be a solution of (1) that satisfies the conditions $%
S(0,\lambda )=1$, $D_{x}^{\alpha }S(0,\lambda )=1.$ It is clear that $%
W_{\alpha }[\varphi (x,\lambda ),S(x,\lambda )]=1$ and the function $\psi
(x,\lambda )$ can be represented by

\begin{equation}
\frac{\psi (x,\lambda )}{\Delta (\lambda )}=S(x,\lambda )-M(\lambda )\varphi
(x,\lambda )
\end{equation}%
where $M(\lambda )=\dfrac{-\psi (0,\lambda )}{\Delta (\lambda )}$ is called
as Weyl function.

\begin{theorem}
If $M(\lambda )=\widetilde{M}(\lambda )$ then $q(x)=\widetilde{q}(x),$ a.e.
in $\left[ 0,\pi \right] $, $h=\widetilde{h}$ and $H=\widetilde{H}.$
\end{theorem}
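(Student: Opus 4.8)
The theorem is the classical Weyl-function uniqueness result (Marchenko-type), now for the conformable fractional Sturm-Liouville operator. The standard proof strategy goes via a contour-integration / Rouché-type argument using a matrix of transformation functions, and I'd follow exactly that template, taking care that the conformable asymptotics (6)–(9) and the lower bound (10) are what make everything go through.

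**The plan.**

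First I would define the pair of functions
$$
P_1(x,\lambda)=\varphi(x,\lambda)\widetilde{D_x^{\alpha}\psi}(x,\lambda)-\psi(x,\lambda)\widetilde{D_x^{\alpha}\varphi}(x,\lambda),\qquad
P_2(x,\lambda)=\psi(x,\lambda)\widetilde\varphi(x,\lambda)-\varphi(x,\lambda)\widetilde\psi(x,\lambda)
$$
(using the relation $\psi/\Delta = S - M\varphi$ and its tilded version). A direct computation using $W_\alpha[\varphi,S]=1$ and the hypothesis $M=\widetilde M$ shows that $P_1$ and $P_2$ are, for each fixed $x$, entire in $\lambda$ — the poles of $\psi/\Delta$ at the eigenvalues cancel because the residues involve $M$. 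Next I would solve the linear system for $\varphi$ and $\psi$ in terms of $\widetilde\varphi,\widetilde\psi$ (the determinant of the system is $W_\alpha[\widetilde\psi,\widetilde\varphi]=\widetilde\Delta$), obtaining
$$
\varphi(x,\lambda)=P_1(x,\lambda)\widetilde\varphi(x,\lambda)+P_2(x,\lambda)\widetilde{D_x^{\alpha}\varphi}(x,\lambda),\qquad
\psi(x,\lambda)=P_1(x,\lambda)\widetilde\psi(x,\lambda)+P_2(x,\lambda)\widetilde{D_x^{\alpha}\psi}(x,\lambda).
$$

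Then comes the estimation step: using the asymptotic formulas (6)–(9) for $\varphi,\psi,\widetilde\varphi,\widetilde\psi$ and their $\alpha$-derivatives, together with the lower bound (10) for $\Delta$ (and $\widetilde\Delta$) on the set $\sqrt\lambda\in G_\delta$, I would show that $P_1(x,\lambda)$ is bounded and $P_2(x,\lambda)\to 0$ as $|\lambda|\to\infty$ along $G_\delta$, uniformly in $x\in[0,\pi]$. Since $P_1,P_2$ are entire in $\lambda$ and bounded on $G_\delta$ — whose complement is a union of small disks — the maximum modulus principle (applied disk by disk, as in the classical argument) upgrades these to bounds on all of $\mathbb{C}$. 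Liouville's theorem then gives $P_2(x,\lambda)\equiv 0$ and $P_1(x,\lambda)\equiv P_1(x)$ independent of $\lambda$; feeding back $\lambda\to\infty$ in the asymptotics forces $P_1(x)\equiv 1$. Hence $\varphi(x,\lambda)\equiv\widetilde\varphi(x,\lambda)$ and $\psi(x,\lambda)\equiv\widetilde\psi(x,\lambda)$ for all $x,\lambda$.

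**Conclusion and the main obstacle.**

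Once $\varphi\equiv\widetilde\varphi$, I would plug into the equation (1): since $-D_x^\alpha D_x^\alpha\varphi+q\varphi=\lambda\varphi$ and the same with tildes, subtracting gives $(q(x)-\widetilde q(x))\varphi(x,\lambda)=0$; as $\varphi(x,\lambda)\not\equiv 0$, we get $q=\widetilde q$ a.e. on $[0,\pi]$. The initial conditions (5) for $\varphi$ at $x=0$ then give $h=D_x^\alpha\varphi(0,\lambda)=\widetilde h$, and the condition $V(\varphi)=0$ at $x=\pi$ (equivalently reading off $\psi(\pi,\lambda)=1$, $D_x^\alpha\psi(\pi,\lambda)=-H=-\widetilde H$) gives $H=\widetilde H$. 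I expect the main obstacle to be the estimation step: one must verify that the conformable asymptotic expansions (6)–(9) are uniform in $x$ and that the error terms, after the cancellations defining $P_1,P_2$, genuinely decay — in particular checking that the $\exp(\tfrac{|\tau|}{\alpha}x^\alpha)\exp(\tfrac{|\tau|}{\alpha}(\pi^\alpha-x^\alpha))=\exp(\tfrac{|\tau|}{\alpha}\pi^\alpha)$ growth in the numerators is exactly matched by the $\exp(\tfrac{|\tau|}{\alpha}\pi^\alpha)$ in the denominator from (10), leaving the extra $1/\sqrt\lambda$ in $P_2$. The conformable exponents $x^\alpha$ rather than $x$ change none of the structure of this bookkeeping, but it must be carried out carefully.
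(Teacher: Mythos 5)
Your proposal is correct and follows essentially the same route as the paper: the same pair $P_1,P_2$ built from $\varphi$ and the Weyl solution, entirety from $M=\widetilde M$, the bounds $P_1=O(1)$, $P_2=O(1/\sqrt{\lambda})$ from (5)--(9) and (10), Liouville's theorem, and normalization of $P_1$ to $1$ via the asymptotics (the paper instead normalizes through $W_\alpha[\varphi,\psi/\Delta]=1$, giving $A^2(x)=1$, but this is equivalent). One small notational point: in your displayed definitions $P_1,P_2$ should be written with the Weyl solution $\phi=\psi/\Delta$ rather than $\psi$ itself --- as your own parenthetical and the pole-cancellation remark make clear you intend --- since with $\psi$ the functions are trivially entire and bounded by $\exp(|\tau|\pi^\alpha/\alpha)$ rather than $O(1)$.
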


\begin{proof}
Let us consider the functions $P_{1}(x,\lambda )$ and $P_{2}(x,\lambda )$
which are defined by the following formulas%
\begin{eqnarray}
P_{1}(x,\lambda ) &=&\varphi (x,\lambda )D_{x}^{\alpha }\widetilde{\phi }%
(x,\lambda )-\phi (x,\lambda )D_{x}^{\alpha }\widetilde{\varphi }(x,\lambda
), \\
P_{2}(x,\lambda ) &=&\phi (x,\lambda )\widetilde{\varphi }(x,\lambda
)-\varphi (x,\lambda )\widetilde{\phi }(x,\lambda ),
\end{eqnarray}%
where $\phi (x,\lambda )=\frac{\psi (x,\lambda )}{\Delta (\lambda )}.$ It is
easy to see that the functions $P_{1}(x,\lambda )$ and $P_{2}(x,\lambda )$
are meromorphic with respect to $\lambda .$ Moreover, $M(\lambda )=%
\widetilde{M}(\lambda )$ and (11) yield that $P_{1}$ and $P_{2}$ are entire
in $\lambda .$ It follows from asymptotic formulas (5)-(9) that $%
P_{1}(x,\lambda )=O(1)$ and $P_{2}(x,\lambda )=O\left( \dfrac{1}{\sqrt{%
\lambda }}\right) $. Therefore, we obtain $P_{1}(x,\lambda )=A(x)$ and $%
P_{2}(x,\lambda )=0$ by well-known Liouville's theorem. From (12) and (13)
we get 
\begin{equation*}
\varphi (x,\lambda )=A(x)\widetilde{\varphi }(x,\lambda )\text{ and }\phi
(x,\lambda )=A(x)\widetilde{\phi }(x,\lambda ).
\end{equation*}%
On the other hand, since%
\begin{equation*}
W_{\alpha }[\varphi (x,\lambda ),\phi (x,\lambda )]=\frac{W_{\alpha
}[\varphi (x,\lambda ),\psi (x,\lambda )]}{\Delta (\lambda )}=1
\end{equation*}%
and similarly $W_{\alpha }[\widetilde{\varphi }(x,\lambda ),\widetilde{\phi }%
(x,\lambda )]=1,$ then\ $A^{2}(x)=1$ for all $x\in \left[ 0,\pi \right] .$
Taking into account the asymptotic expressions of $\varphi (x,\lambda )$ and 
$\widetilde{\varphi }(x,\lambda ),$ we get $A(x)\equiv 1.$ Hence $\varphi
(x,\lambda )=\widetilde{\varphi }(x,\lambda ),$ $\phi (x,\lambda )=%
\widetilde{\phi }(x,\lambda )$ and so $q(x)=\widetilde{q}(x),$ a.e. in $%
\left[ 0,\pi \right] $, $h=\widetilde{h}$ and $H=\widetilde{H}.$
\end{proof}

\subsection{According to two given spectra or a spectrum and norming
cons-tants}

We consider the boundary value problem $L_{\alpha ,1}$ with the condition $%
y(0,\lambda )=0$ instead of (2) in $L_{\alpha }$. Let $\{\xi _{n}\}_{n\geq
0} $ be the eigenvalues of the problem $L_{\alpha ,1}$. It is obvious that $%
\xi _{n}$ are zeros of $\Delta _{1}(\xi ):=\psi (0,\xi ).$

\begin{theorem}
If$\ \left\{ \lambda _{n},\xi _{n}\right\} _{n\geq 0}=\left\{ \widetilde{%
\lambda }_{n},\widetilde{\xi }_{n}\right\} _{n\geq 0}$ then $q(x)=\widetilde{%
q}(x),$ a.e. in $\left[ 0,\pi \right] $, $h=\widetilde{h}$ and $H=\widetilde{%
H}.$
\end{theorem}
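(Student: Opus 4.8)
The plan is to deduce the theorem from Theorem 1 by showing that the hypothesis forces $M(\lambda)=\widetilde M(\lambda)$; this is the conformable analogue of Borg's classical two-spectra theorem. By (11) and the definition of the Weyl function,
\[
M(\lambda)=\frac{-\psi(0,\lambda)}{\Delta(\lambda)}=\frac{-\Delta_{1}(\lambda)}{\Delta(\lambda)},
\]
so $M$ is the quotient of the two characteristic functions $\Delta_{1}(\lambda)=\psi(0,\lambda)$, whose zeros are $\{\xi_{n}\}_{n\ge 0}$, and $\Delta(\lambda)$, whose zeros are $\{\lambda_{n}\}_{n\ge 0}$. As in the classical case the two spectra strictly interlace (a consequence of the asymptotics (5)--(9) and the sign behaviour of $\Delta$ and $\Delta_{1}$ on the real axis), so the equality of the combined sets $\{\lambda_{n},\xi_{n}\}=\{\widetilde\lambda_{n},\widetilde\xi_{n}\}$ is equivalent to $\lambda_{n}=\widetilde\lambda_{n}$ and $\xi_{n}=\widetilde\xi_{n}$ for all $n$; in particular $\Delta$ and $\Delta_{1}$ have no common zeros.

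First I would record that $\Delta(\lambda)$ and $\Delta_{1}(\lambda)$ are entire of order $1/2$ in $\lambda$, with
\[
\Delta(\lambda)=-\sqrt{\lambda}\sin\!\Big(\tfrac{\sqrt{\lambda}}{\alpha}\pi^{\alpha}\Big)+O\!\Big(\exp\!\big(\tfrac{|\tau|}{\alpha}\pi^{\alpha}\big)\Big),\qquad \Delta_{1}(\lambda)=\cos\!\Big(\tfrac{\sqrt{\lambda}}{\alpha}\pi^{\alpha}\Big)+O\!\Big(\tfrac{1}{\sqrt{\lambda}}\exp\!\big(\tfrac{|\tau|}{\alpha}\pi^{\alpha}\big)\Big),
\]
which follows from (5)--(9) together with $\Delta=V(\varphi)$ and $\Delta_{1}=\psi(0,\cdot)$; note that the leading terms do not involve $q,h,H$. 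Next I would consider $F(\lambda):=\Delta(\lambda)/\widetilde\Delta(\lambda)$. Since $\Delta$ and $\widetilde\Delta$ have exactly the same simple zeros, $F$ is entire; the lower bound (10) for $\widetilde\Delta$ on $G_{\delta}$ together with the matching pointwise upper bound $|\Delta(\lambda)|\le C|\sqrt{\lambda}|\exp(\tfrac{|\tau|}{\alpha}\pi^{\alpha})$ (valid on the whole $\sqrt{\lambda}$-plane) show that $F$ is bounded outside the discs defining $G_{\delta}$; a maximum-modulus argument across those discs makes $F$ bounded on all of $\mathbb{C}$, hence constant by Liouville's theorem, and letting $\lambda\to-\infty$ in the displayed asymptotics pins the constant at $1$. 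Thus $\Delta=\widetilde\Delta$, and the same argument with a lower bound for $\Delta_{1}$ off its zeros (established exactly as (10)) and the asymptotics of $\psi(0,\cdot)$ gives $\Delta_{1}=\widetilde\Delta_{1}$. Consequently
\[
M(\lambda)=\frac{-\Delta_{1}(\lambda)}{\Delta(\lambda)}=\frac{-\widetilde\Delta_{1}(\lambda)}{\widetilde\Delta(\lambda)}=\widetilde M(\lambda),
\]
and Theorem 1 yields $q(x)=\widetilde q(x)$ a.e. on $[0,\pi]$, $h=\widetilde h$ and $H=\widetilde H$.

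I expect the main obstacle to be the step ``the zeros determine the characteristic function.'' One must check not only that $\Delta/\widetilde\Delta$ and $\Delta_{1}/\widetilde\Delta_{1}$ are entire, but that they are bounded, which needs both the lower estimate (10) away from the zeros and a matching upper estimate everywhere, plus the Phragm\'en--Lindel\"of / maximum-modulus passage across the excluded discs; one must then argue that the resulting constant equals $1$, which hinges on the coefficient-independence of the leading asymptotic terms. A small separate lemma is also required, namely the analogue of (10) for $\Delta_{1}=\psi(0,\cdot)$, and one line should be devoted to the interlacing property, which is what converts the set equality in the hypothesis into termwise equality of the two spectra.
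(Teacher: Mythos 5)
Your proposal is correct and follows the same overall architecture as the paper: convert the two-spectra hypothesis into $\Delta=\widetilde\Delta$ and $\Delta_{1}=\widetilde\Delta_{1}$, conclude $M(\lambda)=-\Delta_{1}(\lambda)/\Delta(\lambda)=\widetilde M(\lambda)$, and invoke Theorem 1. The difference lies entirely in how the key step ``the spectrum determines the characteristic function'' is established. The paper simply quotes the Hadamard-type product representation
\begin{equation*}
\Delta (\lambda )=\frac{\pi ^{3\alpha -2}}{\alpha ^{3}}\left( \lambda
_{0}-\lambda \right) \prod\limits_{n=0}^{\infty }\left( \dfrac{\lambda
_{n}-\lambda }{n^{2}}\right)
\end{equation*}
from Theorem 3.11 of \cite{Oz} (and asserts the analogue for $\Delta_{1}$), which makes the implication $\lambda_{n}=\widetilde\lambda_{n}\Rightarrow\Delta\equiv\widetilde\Delta$ immediate. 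You instead re-derive this fact from scratch via the quotient argument: $\Delta/\widetilde\Delta$ is entire, bounded by combining the lower bound (10) with a matching upper bound and a maximum-modulus passage across the excluded discs, hence constant, with the constant pinned to $1$ by the coefficient-independent leading asymptotics. Both routes are standard; the paper's is shorter because the heavy lifting is outsourced to \cite{Oz}, while yours is self-contained but must supply the auxiliary lower bound for $\Delta_{1}$ off its zeros (which, to be fair, the paper also needs implicitly for its ``similarly'' concerning $\Delta_{1}$). Your remark on interlacing, used to convert the set equality in the hypothesis into termwise equality of the two spectra, is a point the paper glosses over entirely --- it reads the hypothesis directly as $\lambda_{n}=\widetilde\lambda_{n}$ and $\xi_{n}=\widetilde\xi_{n}$ for all $n$ --- so your treatment is, if anything, more careful on that point.
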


\begin{proof}
According to the Theorem 3.11 in \cite{Oz}, the function $\Delta (\lambda )$
can be represented as follows%
\begin{equation}
\Delta (\lambda )=\frac{\pi ^{3\alpha -2}}{\alpha ^{3}}\left( \lambda
_{0}-\lambda \right) \prod\limits_{n=0}^{\infty }\left( \dfrac{\lambda
_{n}-\lambda }{n^{2}}\right) .
\end{equation}%
Therefore, $\Delta (\lambda )\equiv \widetilde{\Delta }(\lambda )$
(similarly $\Delta _{1}(\xi )\equiv \widetilde{\Delta }_{1}(\xi )$)$,$ when $%
\lambda _{n}=\widetilde{\lambda }_{n}$ ($\xi _{n}=\widetilde{\xi }_{n}$) for
all $n.$ Consequently, $M(\lambda )\equiv \widetilde{M}(\lambda )\ $and so
the proof is completed by Theorem 1.
\end{proof}

\begin{lemma}[\protect\cite{Oz}, Lemma 3.5]
Denote $\alpha _{n}=\left\Vert \varphi (x,\lambda _{n})\right\Vert
_{2,\alpha }^{2}=\int\limits_{0}^{\pi }\varphi ^{2}(x,\lambda _{n})d_{\alpha
}x.$ Then, we have $\beta _{n}\alpha _{n}=-\Delta ^{\prime }(\lambda _{n})$,
where $\beta _{n}=\varphi (\pi ,\lambda _{n})=\dfrac{1}{\psi (0,\lambda _{n})%
}$.
\end{lemma}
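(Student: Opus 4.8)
The plan is to transplant the classical Wronskian computation for norming constants to the conformable calculus. The basic observation is the pointwise identity $(\ell u)\,v-u\,(\ell v)=D_{x}^{\alpha}W_{\alpha}[u,v]$, valid for sufficiently smooth $u,v$, where $W_{\alpha}[u,v]=u\,D_{x}^{\alpha}v-v\,D_{x}^{\alpha}u$; this is a direct computation from the product rule $D_{x}^{\alpha}(fg)=(D_{x}^{\alpha}f)g+f(D_{x}^{\alpha}g)$, since $\ell y=-D_{x}^{\alpha}D_{x}^{\alpha}y+q(x)y$. Integrating over $[0,\pi]$ in the measure $d_{\alpha}x$ and using the conformable Newton--Leibniz formula $I_{\alpha}(D_{x}^{\alpha}g)(x)=g(x)-g(0)$ (immediate from $D_{x}^{\alpha}g=x^{1-\alpha}g'$; cf. \cite{abd}) yields the Green (Lagrange) identity
\[
\int_{0}^{\pi}\big((\ell u)\,v-u\,(\ell v)\big)\,d_{\alpha}x=W_{\alpha}[u,v]\big|_{x=\pi}-W_{\alpha}[u,v]\big|_{x=0}.
\]

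I would then apply this with $u=\varphi(x,\lambda)$ and $v=\varphi(x,\lambda_{n})$. As $\ell u=\lambda u$ and $\ell v=\lambda_{n}v$, the left-hand side equals $(\lambda-\lambda_{n})\int_{0}^{\pi}\varphi(x,\lambda)\varphi(x,\lambda_{n})\,d_{\alpha}x$. At $x=0$ both functions carry the initial data $\varphi(0,\cdot)=1$, $D_{x}^{\alpha}\varphi(0,\cdot)=h$ from (2), hence $W_{\alpha}[u,v]\big|_{x=0}=0$. At $x=\pi$, the relation $\Delta(\lambda)=V(\varphi(\cdot,\lambda))=D_{x}^{\alpha}\varphi(\pi,\lambda)+H\varphi(\pi,\lambda)$ together with $\Delta(\lambda_{n})=0$ forces $D_{x}^{\alpha}\varphi(\pi,\lambda_{n})=-H\varphi(\pi,\lambda_{n})$, so that $W_{\alpha}[u,v]\big|_{x=\pi}=-\varphi(\pi,\lambda_{n})\,\Delta(\lambda)$. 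Therefore
\[
(\lambda-\lambda_{n})\int_{0}^{\pi}\varphi(x,\lambda)\varphi(x,\lambda_{n})\,d_{\alpha}x=-\varphi(\pi,\lambda_{n})\,\Delta(\lambda).
\]

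Next I would divide by $\lambda-\lambda_{n}$ (both sides vanish at $\lambda_{n}$ since $\Delta(\lambda_{n})=0$) and let $\lambda\to\lambda_{n}$. Because $\varphi(x,\lambda)$ is entire in $\lambda$, uniformly in $x\in[0,\pi]$, the left-hand side tends to $\int_{0}^{\pi}\varphi^{2}(x,\lambda_{n})\,d_{\alpha}x=\alpha_{n}$; because $\Delta$ is entire and $\lambda_{n}$ is a simple zero of it, $\Delta(\lambda)/(\lambda-\lambda_{n})\to\Delta'(\lambda_{n})$. This gives $\alpha_{n}=-\varphi(\pi,\lambda_{n})\,\Delta'(\lambda_{n})$. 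Since $\lambda_{n}$ is simple, $\varphi(\cdot,\lambda_{n})$ and $\psi(\cdot,\lambda_{n})$ are proportional eigenfunctions, and comparing them at $x=0$ and $x=\pi$ gives $\varphi(\pi,\lambda_{n})\,\psi(0,\lambda_{n})=1$; expressing $\varphi(\pi,\lambda_{n})$ through $\beta_{n}$ by this relation turns the previous display into the asserted identity $\beta_{n}\alpha_{n}=-\Delta'(\lambda_{n})$.

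The step I expect to be the main obstacle is the conformable Green's identity and the endpoint evaluations of $W_{\alpha}$: one has to handle the product rule and the Newton--Leibniz formula with the $x^{1-\alpha}$ weight near $x=0$, keep careful track of the signs coming from $\ell=-D_{x}^{\alpha}D_{x}^{\alpha}+q$, remember that $W_{\alpha}[u,v]$ is \emph{not} $x$-independent here (it is constant only for two solutions of (1) with the same spectral parameter), and invoke the boundary conditions (2)--(3) and the identity $\Delta=V(\varphi)=-U(\psi)$ at precisely the right places. As an independent check one can instead differentiate $\ell\varphi=\lambda\varphi$ in $\lambda$ to get $\ell\,\partial_{\lambda}\varphi-\lambda\,\partial_{\lambda}\varphi=\varphi$, combine it with $\ell\psi=\lambda\psi$ to obtain $D_{x}^{\alpha}W_{\alpha}[\psi,\partial_{\lambda}\varphi]=-\psi\varphi$, integrate over $[0,\pi]$ with $d_{\alpha}x$, and use $\psi(\pi,\cdot)\equiv1$, $D_{x}^{\alpha}\psi(\pi,\cdot)\equiv-H$, $\partial_{\lambda}\varphi(0,\cdot)=D_{x}^{\alpha}\partial_{\lambda}\varphi(0,\cdot)=0$; this produces $\Delta'(\lambda)=-\int_{0}^{\pi}\psi(x,\lambda)\varphi(x,\lambda)\,d_{\alpha}x$, whence the same conclusion follows at $\lambda=\lambda_{n}$.
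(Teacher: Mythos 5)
The paper offers no proof of this lemma at all --- it is imported verbatim from \cite{Oz}, Lemma 3.5 --- so your argument has to stand on its own, and it very nearly does. The conformable Lagrange identity, the vanishing of $W_{\alpha}$ at $x=0$ (equal initial data), the evaluation $W_{\alpha}\big|_{x=\pi}=-\varphi(\pi,\lambda_{n})\Delta(\lambda)$ via $V(\varphi(\cdot,\lambda_{n}))=0$, and the passage to the limit $\lambda\to\lambda_{n}$ using simplicity of the zero are all correct, and both of your routes land on the same identity
\[
\alpha_{n}=-\varphi(\pi,\lambda_{n})\,\Delta'(\lambda_{n}),\qquad\text{equivalently}\qquad \psi(0,\lambda_{n})\,\alpha_{n}=-\Delta'(\lambda_{n}),
\]
the two forms being linked by $\varphi(\pi,\lambda_{n})\psi(0,\lambda_{n})=1$, which you also justify correctly.

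The gap is entirely in your last sentence. With the lemma's own definition $\beta_{n}=\varphi(\pi,\lambda_{n})$, what you proved reads $\alpha_{n}=-\beta_{n}\Delta'(\lambda_{n})$, whereas the lemma asserts $\beta_{n}\alpha_{n}=-\Delta'(\lambda_{n})$; these differ by a factor $\beta_{n}^{2}$, and no substitution of $\varphi(\pi,\lambda_{n})=1/\psi(0,\lambda_{n})$ can move $\varphi(\pi,\lambda_{n})$ from one side of the equation to the other. What your computation actually establishes is the classical form of the result (as in \cite{G}): $\beta_{n}\alpha_{n}=-\Delta'(\lambda_{n})$ where $\beta_{n}$ is the proportionality constant in $\psi(x,\lambda_{n})=\beta_{n}\varphi(x,\lambda_{n})$, i.e.\ $\beta_{n}=\psi(0,\lambda_{n})=1/\varphi(\pi,\lambda_{n})$ --- the \emph{reciprocal} of the quantity the statement here names $\beta_{n}$. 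So either the statement's identification of $\beta_{n}$ is a transcription slip (in which case your proof is complete once you say so explicitly and fix the convention), or, if $\beta_{n}=\varphi(\pi,\lambda_{n})$ is genuinely intended, the asserted formula is false in general and your final step is where it breaks. You must not gloss over this with ``expressing $\varphi(\pi,\lambda_{n})$ through $\beta_{n}$''; state precisely which identity you have proved. (For the use made of the lemma in Theorem 3 the distinction is harmless, since either version determines $\psi(0,\lambda_{n})$ from $\{\lambda_{n},\alpha_{n}\}$ and $\Delta$.)
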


The numbers $\alpha _{n}$ in Lemma 1 are called norming constants.

\begin{theorem}
If $\left\{ \lambda _{n},\alpha _{n}\right\} _{n\geq 0}=\left\{ \widetilde{%
\lambda }_{n},\widetilde{\alpha }_{n}\right\} _{n\geq 0}$ then $q(x)=%
\widetilde{q}(x),$ a.e. in $\left[ 0,\pi \right] $, $h=\widetilde{h}$ and $H=%
\widetilde{H}$.
\end{theorem}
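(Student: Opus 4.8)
The plan is to reduce Theorem~4 to Theorem~1 by showing that the data $\{\lambda_n,\alpha_n\}_{n\geq 0}$ determines the Weyl function $M(\lambda)$. First I would use the product representation (14) of $\Delta(\lambda)$: since the eigenvalues $\{\lambda_n\}$ coincide with $\{\widetilde\lambda_n\}$, formula (14) gives immediately $\Delta(\lambda)\equiv\widetilde\Delta(\lambda)$, and hence also $\Delta'(\lambda_n)=\widetilde\Delta'(\widetilde\lambda_n)$ for every $n$. Combining this with the hypothesis $\alpha_n=\widetilde\alpha_n$ and Lemma~1, which states $\beta_n\alpha_n=-\Delta'(\lambda_n)$, I conclude $\beta_n=\widetilde\beta_n$, where $\beta_n=\varphi(\pi,\lambda_n)=1/\psi(0,\lambda_n)$. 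In particular $\psi(0,\lambda_n)=\widetilde\psi(0,\widetilde\lambda_n)$ for all $n$.

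Next I would recover $M(\lambda)$ itself. Recall $M(\lambda)=-\psi(0,\lambda)/\Delta(\lambda)$, so $-\psi(0,\lambda)=M(\lambda)\Delta(\lambda)$ is an entire function of $\lambda$ (the zeros of $\Delta$ are cancelled, since the $\lambda_n$ are simple). Consider the difference $F(\lambda):=\psi(0,\lambda)-\widetilde\psi(0,\lambda)$, which is entire. From the asymptotics (7) one has $\psi(0,\lambda)=\cos(\tfrac{\sqrt\lambda}{\alpha}\pi^\alpha)+O(\tfrac{1}{\sqrt\lambda}\exp(\tfrac{|\tau|}{\alpha}\pi^\alpha))$, with the same leading term for $\widetilde\psi(0,\lambda)$; hence $F(\lambda)=O(\tfrac{1}{\sqrt\lambda}\exp(\tfrac{|\tau|}{\alpha}\pi^\alpha))$. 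Now the function
\begin{equation*}
\Phi(\lambda):=\frac{\psi(0,\lambda)-\widetilde\psi(0,\lambda)}{\Delta(\lambda)}=M(\lambda)-\widetilde M(\lambda),
\end{equation*}
is entire, because its only possible poles are at the $\lambda_n$, where the numerator vanishes by the previous paragraph. Using the lower bound (10) for $|\Delta(\lambda)|$ on the sets $\sqrt\lambda\in G_\delta$, together with the bound on $F$, I obtain $\Phi(\lambda)=O(1/\lambda)$ along those sequences of expanding contours, and a standard maximum-modulus argument over the regions enclosed shows $\Phi(\lambda)\to 0$ as $|\lambda|\to\infty$. By Liouville's theorem $\Phi\equiv 0$, i.e. $M(\lambda)\equiv\widetilde M(\lambda)$.

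Finally, with $M\equiv\widetilde M$ in hand, Theorem~1 applies directly and yields $q=\widetilde q$ a.e. on $[0,\pi]$, $h=\widetilde h$ and $H=\widetilde H$, completing the proof. The main obstacle I anticipate is the middle step: verifying carefully that $\Phi$ is genuinely entire (that every pole is removable requires the simplicity of the $\lambda_n$ together with $\psi(0,\lambda_n)=\widetilde\psi(0,\widetilde\lambda_n)$, not merely $\Delta\equiv\widetilde\Delta$) and then controlling $\Phi$ uniformly — one must pass from the estimate on the circles $\sqrt\lambda\in G_\delta$ to a bound on all of $\mathbb{C}$ via the Phragmén–Lindelöf / maximum-modulus principle on the annular regions between consecutive contours. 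The asymptotic inputs (7) and (10) are exactly what make this work, so the estimate itself is routine once the analyticity is pinned down.
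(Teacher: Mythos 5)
Your proposal is correct and follows essentially the same route as the paper: deduce $\Delta\equiv\widetilde\Delta$ from the product formula (14), use Lemma~1 to get $\psi(0,\lambda_n)=\widetilde\psi(0,\lambda_n)$, show the quotient $\bigl(\psi(0,\lambda)-\widetilde\psi(0,\lambda)\bigr)/\Delta(\lambda)$ is entire and $O(1/\lambda)$, and conclude $M\equiv\widetilde M$ so that Theorem~1 applies. The only (harmless) slip is a sign: since $M(\lambda)=-\psi(0,\lambda)/\Delta(\lambda)$, your $\Phi$ equals $\widetilde M-M$ rather than $M-\widetilde M$, which does not affect the conclusion.
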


\begin{proof}
Since $\lambda _{n}=\widetilde{\lambda }_{n},$ $\Delta (\lambda )\equiv 
\widetilde{\Delta }(\lambda ).$ Therefore, it is obtained by using Lemma 3
that $\beta _{n}=\widetilde{\beta }_{n}$ and so $\psi (0,\lambda _{n})=%
\widetilde{\psi }(0,\lambda _{n}).$ Hence the function 
\begin{equation*}
G(\lambda ):=\dfrac{\psi (0,\lambda )-\widetilde{\psi }(0,\lambda )}{\Delta
(\lambda )}
\end{equation*}%
is entire on $\lambda .$ Moreover, one can obtained from (7) and (9) that $%
G(\lambda )=O(\dfrac{1}{\lambda })$ for $\left\vert \lambda \right\vert
\rightarrow \infty .$ Thus $G(\lambda )\equiv 0$ and $\psi (0,\lambda
)\equiv \widetilde{\psi }(0,\lambda ).$ Finally, we get $M(\lambda )=%
\widetilde{M}(\lambda )$ and so obtain our desired result by Theorem 1.
\end{proof}

\subsection{According to the mixed data}

The next theorem is a generalized version of well-known Hochstadt and
Lieberman theorem in the classical Sturm-Liouville theory.

\begin{theorem}
If $\left\{ \lambda _{n}\right\} _{n\geq 0}=\left\{ \widetilde{\lambda }%
_{n}\right\} _{n\geq 0}$, $H=\widetilde{H}$ and $q(x)=\widetilde{q}(x)$ on $%
\left( \pi /2,\pi \right) $ then $q(x)=\widetilde{q}(x),$ a.e. in $[0,\pi ]$
and $h=\widetilde{h}$.
\end{theorem}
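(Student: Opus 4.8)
The plan is to adapt the classical Hochstadt--Lieberman argument to the conformable setting, using the representation of $\psi(x,\lambda)$ via the Weyl function together with the product formula for $\Delta(\lambda)$. First I would note that since $\{\lambda_n\}=\{\widetilde\lambda_n\}$, the product representation (14) immediately gives $\Delta(\lambda)\equiv\widetilde\Delta(\lambda)$. Next, I introduce the entire function
\begin{equation*}
\Phi(x,\lambda):=\varphi(x,\lambda)D_x^\alpha\psi(x,\lambda)-\psi(x,\lambda)D_x^\alpha\varphi(x,\lambda)+\bigl(\varphi(x,\lambda)\widetilde\psi(x,\lambda)-\psi(x,\lambda)\widetilde\varphi(x,\lambda)\bigr)q_0(x),
\end{equation*}
or, more in the spirit of the Weyl-function proof, I form a suitable combination of products of $\varphi,\psi$ and their tilded analogues that is entire in $\lambda$. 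Concretely, the key object is
\begin{equation*}
H(\lambda):=\int_0^{\pi/2}\bigl(q(x)-\widetilde q(x)\bigr)\varphi(x,\lambda)\widetilde\varphi(x,\lambda)\,d_\alpha x,
\end{equation*}
which, by the standard manipulation using equation (1) for both $\varphi$ and $\widetilde\varphi$ (multiply, subtract, integrate, and use the conformable integration-by-parts from \cite{abd}), can be expressed as a boundary term at $x=\pi/2$ involving $\varphi,\widetilde\varphi$ and their $D_x^\alpha$-derivatives.

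The second step exploits the hypothesis $q=\widetilde q$ on $(\pi/2,\pi)$ and $H=\widetilde H$: this forces $\psi(x,\lambda)=\widetilde\psi(x,\lambda)$ for $x\in[\pi/2,\pi]$ (since $\psi$ and $\widetilde\psi$ solve the same equation on that interval with the same terminal conditions (4)). Hence at $x=\pi/2$ we have $\psi(\pi/2,\lambda)=\widetilde\psi(\pi/2,\lambda)$ and $D_x^\alpha\psi(\pi/2,\lambda)=D_x^\alpha\widetilde\psi(\pi/2,\lambda)$. Using the Wronskian identity $W_\alpha[\psi,\varphi]=\Delta(\lambda)=\widetilde\Delta(\lambda)=W_\alpha[\widetilde\psi,\widetilde\varphi]$ together with these matching terminal data, one rewrites the boundary term from Step 1 so that $H(\lambda)\Delta(\lambda)$ equals an entire function built from $\varphi,\widetilde\varphi$ and $\psi$ evaluated on $[0,\pi/2]$; dividing by $\Delta(\lambda)$, the resulting function
\begin{equation*}
F(\lambda):=\frac{\varphi(\pi/2,\lambda)D_x^\alpha\widetilde\varphi(\pi/2,\lambda)-\widetilde\varphi(\pi/2,\lambda)D_x^\alpha\varphi(\pi/2,\lambda)}{\Delta(\lambda)}
\end{equation*}
(or the appropriate analogue) is meromorphic with possible poles only at $\lambda_n$; but since $\psi(x,\lambda_n)$ is an eigenfunction one checks the numerator vanishes at each $\lambda_n$, so $F$ is entire.

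The third step is the asymptotic/Liouville estimate. Using the asymptotic formulas (5)--(8) on $[0,\pi/2]$ and the lower bound (10) for $|\Delta(\lambda)|$ on $G_\delta$, one shows $F(\lambda)=O(\lambda^{-1/2})$ (the exponential factors $\exp(\tfrac{|\tau|}{\alpha}(\pi/2)^\alpha)\cdot\exp(\tfrac{|\tau|}{\alpha}(\pi/2)^\alpha)=\exp(\tfrac{|\tau|}{\alpha}\cdot 2(\pi/2)^\alpha)$ in the numerator must be dominated by $\exp(\tfrac{|\tau|}{\alpha}\pi^\alpha)$ in the denominator — here one needs $2(\pi/2)^\alpha\le\pi^\alpha$, which holds precisely because $0<\alpha\le 1$, so this is the place where the half-interval and the restriction on $\alpha$ interact). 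By Liouville's theorem $F\equiv 0$, hence $H(\lambda)\equiv 0$, i.e. $\int_0^{\pi/2}(q-\widetilde q)\varphi\widetilde\varphi\,d_\alpha x=0$ for all $\lambda$. Finally, using completeness of the products $\{\varphi(x,\lambda_n)\widetilde\varphi(x,\lambda_n)\}$ in $L^2_\alpha(0,\pi/2)$ — which follows from the asymptotics (5) giving $\varphi(x,\lambda)\widetilde\varphi(x,\lambda)=\cos^2(\tfrac{\sqrt\lambda}{\alpha}x^\alpha)+o(1)=\tfrac12+\tfrac12\cos(\tfrac{2\sqrt\lambda}{\alpha}x^\alpha)+o(1)$ and a Riesz-basis/completeness argument after the substitution $t=x^\alpha/\alpha$ — we conclude $q=\widetilde q$ a.e. on $(0,\pi/2)$, and then $q=\widetilde q$ a.e. on $(0,\pi)$; comparing the constant terms in (5) for $D_x^\alpha\varphi$ at $x=0$ (or using $\omega_\alpha=\widetilde\omega_\alpha$ from (11) together with $H=\widetilde H$ and the now-known equality of $q$) yields $h=\widetilde h$.

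The main obstacle I anticipate is Step 3 together with the completeness claim: one must carefully track the exponential type of the numerator of $F$ on the half-interval and verify the inequality $2(\pi/2)^\alpha\le\pi^\alpha$ is exactly what rescues the estimate (which is the conformable analogue of why the classical theorem uses the midpoint), and then establish that the system $\{\varphi(x,\lambda_n)\widetilde\varphi(x,\lambda_n)\}$ is complete in $L^2_\alpha(0,\pi/2)$ — the cleanest route is the change of variable $t=x^\alpha/\alpha$ mapping $d_\alpha x$ to $dt$ and reducing everything to a statement about $\{\cos^2(\sqrt{\lambda_n}\,t)\}$, i.e. to a classical completeness result, but one should check the eigenvalue asymptotics (11) are good enough to invoke it.
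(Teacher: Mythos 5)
Your skeleton --- the Wronskian identity $D_x^{\alpha}\bigl[\widetilde{\varphi}\,D_x^{\alpha}\varphi-\varphi\,D_x^{\alpha}\widetilde{\varphi}\bigr]=(q-\widetilde{q})\varphi\widetilde{\varphi}$, integration, vanishing of the resulting boundary term at the common eigenvalues (via $\psi\equiv\widetilde{\psi}$ on $[\pi/2,\pi]$), division by $\Delta(\lambda)$, and Liouville --- is the same as the paper's, and your endgame (completeness of the products $\varphi\widetilde{\varphi}$ after the substitution $t=x^{\alpha}/\alpha$) is a legitimate alternative to the paper's reflection-plus-Theorem-1 argument. One bookkeeping gap first: the boundary term of the integrated identity at $x=0$ is $\widetilde{\varphi}(0)D_x^{\alpha}\varphi(0)-\varphi(0)D_x^{\alpha}\widetilde{\varphi}(0)=h-\widetilde{h}$, which you drop. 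The function that you can show vanishes identically is $h-\widetilde{h}+\int_0^{\pi/2}(q-\widetilde{q})\varphi\widetilde{\varphi}\,d_{\alpha}x$, not the bare integral, and carrying that constant is exactly what is needed to conclude $h=\widetilde{h}$ at the end; your closing appeal to $\omega_{\alpha}=\widetilde{\omega}_{\alpha}$ only reproduces the same single relation and does not by itself separate $h-\widetilde{h}$ from $\int_0^{\pi/2}(q-\widetilde{q})\,d_{\alpha}x$.

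The decisive problem is that your key inequality is backwards. You need the exponential type $\tfrac{2|\tau|}{\alpha}(\pi/2)^{\alpha}$ of the numerator to be dominated by the type $\tfrac{|\tau|}{\alpha}\pi^{\alpha}$ appearing in the lower bound (10) for $\Delta(\lambda)$, i.e.\ $2(\pi/2)^{\alpha}\le\pi^{\alpha}$. But $2(\pi/2)^{\alpha}=2^{1-\alpha}\pi^{\alpha}\ge\pi^{\alpha}$ for all $0<\alpha\le 1$, with equality only at $\alpha=1$. So the condition you claim ``holds precisely because $0<\alpha\le1$'' in fact fails for every $\alpha<1$, the quotient $F(\lambda)$ is not bounded on $G_{\delta}$, and the Liouville step collapses. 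The natural ``midpoint'' in the conformable metric $t=x^{\alpha}/\alpha$ is $x_0=\pi\,2^{-1/\alpha}<\pi/2$, so the hypothesis $q=\widetilde{q}$ on $(\pi/2,\pi)$ supplies strictly less than half the interval in the coordinates that govern the growth of $\varphi$. (The paper's own proof simply asserts $|\chi(\lambda)|\le C/|\sqrt{\lambda}|$ without computation and is exposed to exactly the same objection; your instinct to verify the estimate is the right one, but the verification shows this step does not go through as stated for $\alpha<1$.)
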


\begin{proof}
It is clear that the following equality holds%
\begin{equation}
D_{x}^{\alpha }\left[ \widetilde{\varphi }(x,\lambda )D_{x}^{\alpha }\varphi
(x,\lambda )-\varphi (x,\lambda )D_{x}^{\alpha }\widetilde{\varphi }%
(x,\lambda )\right] =\left[ q(x)-\widetilde{q}(x)\right] \varphi (x,\lambda )%
\widetilde{\varphi }(x,\lambda )
\end{equation}%
By integrating (in the conformable fractional integral) both sides of this
equality on $\left[ 0,\pi \right] ,$ we obtain%
\begin{equation*}
\left[ \widetilde{\varphi }(x,\lambda )D_{x}^{\alpha }\varphi (x,\lambda
)-\varphi (x,\lambda )D_{x}^{\alpha }\widetilde{\varphi }(x,\lambda )\right]
_{0}^{\pi }=\int\limits_{0}^{\pi }\left[ q(t)-\widetilde{q}(t)\right]
\varphi (t,\lambda )\widetilde{\varphi }(t,\lambda )d_{\alpha }t.
\end{equation*}%
Since $q(x)=\widetilde{q}(x)$ on $\left( \pi /2,\pi \right) $ and from (4),
it is obvious that 
\begin{equation*}
\left. \widetilde{\varphi }(\pi ,\lambda )D_{x}^{\alpha }\varphi (\pi
,\lambda )-\varphi (\pi ,\lambda )D_{x}^{\alpha }\widetilde{\varphi }(\pi
,\lambda )=h-\widetilde{h}+\int\limits_{0}^{\pi /2}\left[ q(t)-\widetilde{q}%
(t)\right] \varphi (t,\lambda )\widetilde{\varphi }(t,\lambda )d_{\alpha
}t\right.
\end{equation*}%
Let%
\begin{equation*}
\left. H(\lambda ):=h-\widetilde{h}+\int\limits_{0}^{\pi /2}\left[ q(t)-%
\widetilde{q}(t)\right] \varphi (t,\lambda )\widetilde{\varphi }(t,\lambda
)d_{\alpha }t\right.
\end{equation*}%
Since $\widetilde{\varphi }(\pi ,\lambda _{n})D_{x}^{\alpha }\varphi (\pi
,\lambda _{n})-\varphi (\pi ,\lambda _{n})D_{x}^{\alpha }\widetilde{\varphi }%
(\pi ,\lambda _{n})=0,$ $H(\lambda _{n})=0$ for all $n$ and so $\chi
(\lambda ):=\dfrac{H(\lambda )}{\Delta (\lambda )}$ is entire on $\lambda .$
On the other hand, from the asymptotic expressions of $\varphi (x,\lambda )$
and $\widetilde{\varphi }(x,\lambda ),$ it can be calculated that $%
\left\vert \chi (\lambda )\right\vert \leq \frac{C}{\left\vert \sqrt{\lambda 
}\right\vert }$ for sufficiently large $\left\vert \lambda \right\vert $. By
Liouville's Theorem, we get $\chi (\lambda )=0$ for all $\lambda $. Hence $%
H(\lambda )\equiv 0$.

By integrating\ again both sides of the equality (15) on $\left( 0,\pi
/2\right) $, we get%
\begin{equation}
\widetilde{\varphi }(\pi /2,\lambda )D_{x}^{\alpha }\varphi (\pi /2,\lambda
)=\varphi (\pi /2,\lambda )D_{x}^{\alpha }\widetilde{\varphi }(\pi
/2,\lambda )
\end{equation}%
Put $\psi (x,\lambda ):=\varphi (\left( \left( \pi /2\right) ^{\alpha
}-x^{\alpha }\right) ^{1/\alpha },\lambda ).$ From Lemma 2, it is clear that 
$\psi (x,\lambda )$ is the solution of the following initial value problem 
\begin{eqnarray*}
&&\left. -D_{x}^{\alpha }D_{x}^{\alpha }y+q(\left( \left( \pi /2\right)
^{\alpha }-x^{\alpha }\right) ^{1/\alpha })y=\lambda y,\text{ }x\in (0,\pi
/2)\right. \\
&&\left. y(\pi /2,\lambda )=1,\text{ }D_{x}^{\alpha }y(\pi /2,\lambda
)=-h\right.
\end{eqnarray*}%
It follows from (16) that%
\begin{equation}
\widetilde{\psi }(0,\lambda )D_{x}^{\alpha }\psi (0,\lambda )=\psi
(0,\lambda )D_{x}^{\alpha }\widetilde{\psi }(0,\lambda ).
\end{equation}%
Taking into account Theorem 1, it is concluded that $q(x)=$ $\widetilde{q}%
(x) $ on $\left[ 0,\pi /2\right] $ and $h=\widetilde{h}.$\ This completes
the proof.
\end{proof}


\begin{thebibliography}{99}
\bibitem{abd} Abdeljawad, T.: On conformable fractional calculus. J Comput
Appl Math. 279, 57--66 (2015)

\bibitem{Ambar} Ambarzumyan, V.A.: \"{U}ber eine Frage der Eigenwerttheorie.
Z. Phys. 53, 690--695 (1929)

\bibitem{Bp} Allahverdiev, Bilender P., Tuna H., Yal\c{c}inkaya Y.:
Conformable fractional Sturm-Liouville equation. Mathematical Methods in the
Applied Sciences. 42(10), 3508-3526 (2019)

\bibitem{al} Al-Towailb, M.A: A q-fractional approach to the regular
Sturm-Liouville problems. Electron J Differ Equ. 88, 1--13 2017

\bibitem{atan} Atangana A, Baleanu D, Alsaedi A.: New properties of
conformable derivative. Open Math. 13, 889--898 (2015)

\bibitem{bal} Baleanu, D., Guvenc, Z.B., Machado, J.T.: New trends in
nanotechnology and fractional calculus applications. Springer, New York
(2010)

\bibitem{Borg} Borg, G.: Eine Umkehrung der Sturm--Liouvilleschen
Eigenwertaufgabe. Bestimmung der Differentialgleichung durch die Eigenwerte.
Acta Math. 78, 1--96 (1946)

\bibitem{chadan} Chadan, K., Colton, D., P\"{a}iv\"{a}rinta, L., Rundell,
W.: An introduction to inverse scattering and inverse spectral problems.
Society for Industrial and Applied Mathematics, (1997)

\bibitem{G} Freiling, G., Yurko V.A.: Inverse Sturm--Liouville Problems and
their Applications, Nova Science, New York (2001)

\bibitem{gelf} Gelfand, I. M., Levitan, B. M.: On the determination of a
differential equation from its spectral function, Trans. Amer. Math. Soc. 1,
253-304 (1951)

\bibitem{ges} Gesztesy, F., Simon, B.: Inverse spectral analysis with
partial information on the potential. II. The case of discrete spectrum.
Trans. Am. Math. Soc. 352(6), 2765-2787 (2000)

\bibitem{hald2} Hald, O.H.: McLaughlin, J.R.: Solution of inverse nodal
problems. Inverse Problems. 5, 307--347 (1989)

\bibitem{hald} Hald, O. H.: Discontinuous inverse eigenvalue problems. Comm.
Pure Appl. Math., 37, 539--577 (1984)

\bibitem{hald3} Hald, O. H.: The inverse Sturm-Liouville problem with
symmetric, potentials, Acta Math. 141, 263-291 (1978)

\bibitem{hoch} Hochstadt, H.: Lieberman, B.: An inverse Sturm--Liouville
problem with mixed given data. SIAM Journal on Applied Mathematics. 34(4),
676-680 (1978)

\bibitem{horv} Horvath, M.: Inverse spectral problems and closed exponential
systems, Ann. of Math. 162 885-918 (2005)

\bibitem{hal} Khalil, R., Al Horani, M., Yousef, A., et al.: A newdefinition
of fractional derivative. J Comput Appl Math. 264, 65--70 (2014)

\bibitem{kro} Khosravian-Arab, H., Dehghan, M., Eslahchi, M.R.: Fractional
Sturm-Liouville boundary value problems in unbounded domains, theory and
applications. J Comput Phys. 299, 526--560 (2015)

\bibitem{kli} Klimek, M, Agrawaln, OP.: Fractional Sturm-Liouville problem.
Comput Math Appl. 6, 795--812 (2013)

\bibitem{lev} Levinson, N.: The inverse Sturm-Liouville problem. Math.
Tidsskr. B, 25, 25-30 (1949)

\bibitem{levitan} Levitan, B.M., Sargsjan, I.S.: Sturm-Liouville and Dirac
Operators. Kluwer, Dordrecht (1991)

\bibitem{main} Mainardi, F.: Fractional calculus and waves in linear
viscoelasticity: an introduction to mathematical models. London, UK:
Imperial College Press, (2010)

\bibitem{marc} Marchenko, V. A.: Concerning the theory of a differential
operator of second order, Dokl. Akad. Nauk. SSR, 72, 457-470 (1950)

\bibitem{marc2} Marchenko, V.A.: Some questions in the theory of
one-dimensional linear differential operators of the second order. I.
Tr.Mosk. Mat. Obs. 1, 327-420 (1952) (Russian). English transl. in Am.Math.
Soc. Trans. 2,1-104 (1973)

\bibitem{mil} Miller, K.S.: An Introduction to Fractional Calculus and
Fractional Differential Equations, J. Wiley and Sons, New York, (1993)

\bibitem{troos} Mochizuki, K., Trooshin, I.: Inverse problem for interior
spectral data of Sturm-Liouville operator, J. Inverse Ill-posed Probl. 9,
425-433 (2001)

\bibitem{mon} Monje, C.A., Chen, Y., Vinagre, B.M., et al.: Fractional-order
systems and controls: fundamentals and applications. Springer, London (2010)

\bibitem{Oz} Mortazaasl, H., Akbarfam, A.J.: Trace formula and inverse nodal
problem for a conformable fractional Sturm-Liouville problem. Inverse
Problems in Science and Engineering, 1-32 (2009)

\bibitem{old} Oldham, K., Spanier, J.: The Fractional Calculus, Theory and
Applications of Differentiation and Integration of Arbitrary Order. Academic
Press, USA (1974)

\bibitem{ozk} Ozkan, A. S.: Inverse Sturm--Liouville problems with
eigenvalue dependent boundary and discontinuity conditions. Inverse Probl.
Sci. Eng., 20(6), 857--868 (2012)

\bibitem{pal} P\'{a}lfalvi, A.: Efficient solution of a vibration equation
involving fractional derivatives. Int J Nonlin Mech. 45, 169--175 (2010)

\bibitem{riv} Rivero, M, Trujillo, J.J., Velasco M.P.: A fractional approach
to the Sturm-Liouville problem. Centr Eur J Phys. 11, 1246--1254 (2013)

\bibitem{sil} Silva, M.F., Machado, J.A.T.: Fractional order PD $\alpha $
joint control of legged robots. J Vib Control. 12, 1483--1501 (2006)
\end{thebibliography}
\end{document}